\numberwithin{equation}{section}
\newtheorem{Theorem}{Theorem}[section]
\newtheorem{Lemma}[Theorem]{Lemma}
\newtheorem{Proposition}[Theorem]{Proposition}
 { \theoremstyle{definition}
\newtheorem{Definition}[Theorem]{Definition}
\newtheorem{Example}[Theorem]{Example}
\newtheorem{Remark}[Theorem]{Remark} }
\def\x{\hat{\boldsymbol{x}}}
\def\P{\mathbf{P}}
\def\R{\mathbb{R}}
\def\N{\mathbb{N}}
\def\U{\mathbf{U}}
\def\M{\mathbf{M}}
\def\L{\mathscr{L}}
\def\X{\mathbf{X}}
\def\f{\mathbf{f}}
\def\z{\mathbf{z}}
\def\bx{\mathbf{x}}
\def\u{\mathbf{u}}
\def\p{\mathbf{p}}
\begin{document}

\allowdisplaybreaks

\renewcommand{\thefootnote}{$\star$}

\newcommand{\arXivNumber}{1508.06884}

\renewcommand{\PaperNumber}{077}

\FirstPageHeading

\ShortArticleName{Moments and Legendre--Fourier Series for Measures Supported on Curves}

\ArticleName{Moments and Legendre--Fourier Series\\ for Measures Supported on Curves\footnote{This paper is a~contribution to the Special Issue
on Orthogonal Polynomials, Special Functions and Applications.
The full collection is available at \href{http://www.emis.de/journals/SIGMA/OPSFA2015.html}{http://www.emis.de/journals/SIGMA/OPSFA2015.html}}}

\Author{Jean B.~LASSERRE}

\AuthorNameForHeading{J.B.~Lasserre}

\Address{LAAS-CNRS and Institute of Mathematics, University of Toulouse,\\
 7 Avenue du Colonel Roche, BP 54 200, 31031 Toulouse C\'edex 4, France}
\Email{\href{mailto:lasserre@laas.fr}{lasserre@laas.fr}}
\URLaddress{\url{http://homepages.laas.fr/lasserre/}}

\ArticleDates{Received August 28, 2015, in f\/inal form September 26, 2015; Published online September 29, 2015}

\Abstract{Some important problems (e.g., in optimal transport and optimal control)
have a relaxed (or weak) formulation in a space of appropriate measures
which is much easier to solve. However, an optimal solution~$\mu$ of the latter solves the former if and only if
the measure~$\mu$ is supported on a ``trajectory'' $\{(t,x(t))\colon t\in [0,T]\}$
for some measurable function~$x(t)$.
We provide necessary and suf\/f\/icient conditions on moments~$(\gamma_{ij})$ of
a measure $d\mu(x,t)$ on $[0,1]^2$ to ensure that $\mu$ is supported on a trajectory $\{(t,x(t))\colon t\in [0,1]\}$.
Those conditions are stated in terms of Legendre--Fourier coef\/f\/icients ${\mathbf f}_j=({\mathbf f}_j(i))$
associated with some functions $f_j\colon [0,1]\to {\mathbb R}$, $j=1,\ldots$, where each ${\mathbf f}_j$ is
obtained from the moments~$\gamma_{ji}$, $i=0,1,\ldots$, of~$\mu$.}

\Keywords{moment problem; Legendre polynomials; Legendre--Fourier series}

\Classification{42C05; 42C10; 42A16; 44A60}

\renewcommand{\thefootnote}{\arabic{footnote}}
\setcounter{footnote}{0}

\section{Introduction}

This paper is in the line of research concerned with the following issue: {\it which type and how much of information
on the support of a measure can be extracted from its moments} (a research issue
outlined in a {\it Problem session} at the 2013 Oberwolfach  meeting
on {\it Structured Function Systems and Applications}~\cite{ober-report}). In particular,
a highly desirable result is to obtain necessary and/or suf\/f\/icient conditions
on moments of a given measure to ensure that its support has certain geometric properties.
For instance there is a vast literature on the old and classical $L$-moment problem,
which asks for  moment conditions to ensure that the underlying measure~$\mu$ is absolutely continuous with respect to some reference measure~$\nu$,
and with a density in~$L_\infty(\nu)$. See, for instance,~\cite{diaconis,putinar1,putinar2}, more recently~\cite{density}, and the many references therein.

Here we are interested in a  problem that is somehow ``orthogonal'' to the
$L$-moment problem. Namely, we consider the following generic problem: Let $d\mu(x,t)$ be a probability measure on $[0,1]\times [0,1]$. Provide necessary and/or suf\/f\/icient conditions
on the moments of~$\mu$ to ensure that~$\mu$ is singular with respect to the Lebesgue measure~$d(x,t)$ on
$[0,1]^2$. In fact, and more precisely, suppose that:
\begin{itemize}\itemsep=0pt
\item one knows all moments $\gamma_{i}(j)=\int x^i t^j\, d\mu(x,t)$, $i,j=0,1,\ldots$,
of the measure $\mu$, and
\item the marginal of $\mu$ with respect to the ``$t$'' variable is the Lebesgue measure~$dt$ on~$[0,1]$.
\end{itemize}
Then provide necessary and/or suf\/f\/icient conditions on
the moments $(\gamma_i(j))$ of $\mu$ to ensure
that~$\mu$ is supported on a trajectory $\{(t,x(t))\colon t\in [0,1]\}\subset [0,1]^2$,
for some measurable function
$x\colon [0,1]\to [0,1]$.

In contrast to the $L$-moment problem, and to the best of our knowledge,
the above problem stated in this form has not received a lot of attention in the past even though
it is crucial in some important applications (two of them
having motivated our interest).

{\bf Motivation.} In addition of being of independent interest,
this investigation is motivated by at least two important applications:

-- \emph{The mass transfer $($or optimal transport$)$ problem.} In the weak (or relaxed) Monge--Kantorovich formulation of the mass transport problem originally stated by Monge, one searches for a measure $d\mu(x,t)$ with prescribed marginals~$\nu_x$ and $\nu_t$, and which minimizes some cost
functional $\int c(x,t)\, d\mu(x,t)$. However in the original Monge formulation,
ultimately one would like to obtain an optimal solution~$\mu^*$ of the form
$d\mu^*(x,t)=\delta_{x(t)}\,d\nu_t(t)$
for some measurable function $t\mapsto x(t)$ (the transportation plan) and a crucial issue is to provide
conditions for this to happen\footnote{Here $\delta_{z}$ denote the Dirac measure at the point~$z$.}. For more details the interested reader is referred, e.g., to~\cite[pp.~1--5]{villani} and~\cite{cann}.  There exist some characterizations
of the support of an optimal measure for the weak formulation. For instance,
{\it $c$-cyclical monotonicity} relates optimality with the support of solutions, and
more recently~\cite{opt-t} have shown
in the (more general) context of the generalized moment problem that under some weak conditions
the support of optimal solutions is {\it finitely minimal~$/$~$c$-monotone}.
(As def\/ined in~\cite{opt-t} a set~$\Gamma$ is called f\/initely minimal~$/$~$c$-monotone if each f\/inite measure~$\alpha$ concentrated
on f\/initely many atoms of~$\Gamma$ is cost minimizing among its competitors;
in the optimal transport context, a competitor of $\alpha$ is any f\/inite measure
$\alpha'$ with same marginals as~$\alpha$.) For more details the interested reader is referred to~\cite{opt-t} and the references therein. But such a~characterization does not
say when this support is a trajectory.

-- \emph{Deterministic optimal control.}
Using the concept of {\it occupation measures}, a {\it weak formulation} of deterministic optimal control problems
replaces the original control problem with an inf\/inite-dimensional optimization problem~$\mathcal{P}$ on a space of appropriate (occupation) measures on a~Borel space
$\mathscr{X}\times\mathscr{U}\times [0,1]$ with $\mathscr{X}\subset\R^n$, $\mathscr{U}\subset\R^m$.
For more details the interested reader is referred, e.g., to~\cite{sicon, vinter}, and the many references therein.
An important issue is to provide conditions on the problem data under which the optimal value
of the relaxed problem~$\mathcal{P}$ is the same as that of the original problem; see, e.g.,~\cite{vinter}. Again this is the case
if some optimal solution~$\mu^*$ (or every element of a minimizing sequence) of the relaxed problem is such that
every marginal $\mu^*_{j}$ of $\mu^*$ with respect to $(x_j,t)$, $j=1,\ldots,n$,
and every marginal $\mu^*_{\ell}$ of~$\mu^*$ with respect to~$(u_\ell,t)$, $\ell=1,\ldots,m$,
is supported on a~trajectory $\{(t,x_j(t))\colon t\in [0,1]\}$ and on a trajectory $\{(t,u_\ell(t))\colon t\in [0,1]\}$ for some measurable functions
$t\mapsto x_j(t)$ and $t\mapsto u_\ell(t)$ on~$[0,1]$.

{\bf Contribution.}
Of course there is a particular case where one may conclude that~$\mu$ is singular
with respect to the Lebesgue measure on~$[0,1]^2$.
If there is a polynomial $p\in\R[x,t]$ of degree say $d$, such that its vector of coef\/f\/icients~$\p$ is in the kernel of the moment matrix $\M_s$ (where $\M_s[(i,j),(k,\ell)]=\gamma_{i+k,j+\ell}$, $i+j, k+\ell\leq s$, with $d\leq s$),
then~$\mu$ is supported on the variety $\{(x,t)\in [0,1]^2\colon p(x,t)=0\}$ and therefore is singular with respect to the Lebesgue measure on~$[0,1]^2$.  But it may happen that
$p(x,t)=p(y,t)=0$ for some $t$ and some~$x\neq y$ and so even in this case additional conditions are needed to ensure existence of a trajectory $\{(t,x(t))\colon t\in [0,1]\}$.

We provide a set of explicit necessary and suf\/f\/icient conditions
on the moments $\boldsymbol{\gamma}_i=(\gamma_i(j))$ which state that for every f\/ixed~$i$, the moments
$\gamma_i(j)$, $j=0,1,\ldots$, are limits of certain $i$-powers of the moments $\boldsymbol{\gamma}_1$.

More precisely, an explicit linear transformation $\Delta \boldsymbol{\gamma}_1$ of the inf\/inite vector $\boldsymbol{\gamma}_1$
is the vector of (shifted) Legendre--Fourier coef\/f\/icients associated with the function $t\mapsto x(t)$. Then the conditions state that for each f\/ixed $i=2,3,\ldots$, the vector $\Delta \boldsymbol{\gamma}_i$ should be the vector of (shifted) Legendre--Fourier coef\/f\/icients associated with the function $t\mapsto x(t)^i$, which in turn are expressible in
terms of limits of ``$i$-powers'' of coef\/f\/icients of $\Delta\boldsymbol{\gamma}_1$.

At last but not least, it should be noted that all results of this paper are easily transposed to
the multi-dimensional case of a measure $d\mu(\mathbf{x},t)$ on $[0,1]^n\times [0,1]$ and supported on
a trajectory $\{(t,\mathbf{x}(t))\colon t\in [0,1]\}\subset [0,1]^{n+1}$
for some measurable mapping $\mathbf{x}\colon [0,1]\to [0,1]^n$. Indeed by proceeding {\it coordinate-wise}
for each function $t\mapsto x_i(t)$, $i=1,\ldots,n$,
one is reduced to the case $[0,1]^2$ investigated here.

\section[Notation, definitions and preliminary results]{Notation, def\/initions and preliminary results}

\subsection[Notation and definitions]{Notation and def\/initions}

Given a Borel probability measure $\mu$ on $[0,1]^2$, def\/ine
\begin{gather}
\label{def-moment}
\gamma_{i}(j) = \int_{[0,1]^2}t^j x^i\, d\mu(x,t),\qquad i,j=0,1,\ldots,
\end{gather}
and for every f\/ixed $i\in \N$, denote by
$\boldsymbol{\gamma}_i$ the vector of moments $(\gamma_i(j))$, $j=0,1,\ldots$.

Let $(\mathscr{L}_j)$, $j=0,1,\ldots$, be the family of orthonormal polynomials with respect to the
Lebesgue measure on $[0,1]$. They can be deduced from the Legendre polynomials\footnote{The Legendre polynomials are
orthonormal w.r.t.\ to the Lebesgue measure on $[-1,1]$.} via the change of variable
$t'=(2t-1)$; the $(\L_j)$ are called the {\it shifted Legendre polynomials}. The polyno\-mials~$(\L_j)$ can be also computed exactly from the moments $\boldsymbol{\gamma}_0$ of the Lebesgue measure $dt$ on $[0,1]$
by computing some determinants of modif\/ied Hankel moment matrices. For instance, $\L_0=1$, and
\begin{gather*}
\L_1(t) = a \det \left(
\begin{matrix}
1  &  1/2    \\
1  & t
\end{matrix}
\right) = a (t-1/2) \qquad \mbox{with} \quad a^2  (1/3-1/2+1/4) = 1,
\end{gather*}
i.e., $a=\sqrt{12}$ and $\L_1(t)=2\sqrt{3} t-\sqrt{3}$, and
\begin{gather*}
\L_2(t) = b \det \left(
\begin{matrix}
1  &  1/2  &1/3  \\
1/2  & 1/3 & 1/4\\
1 & t & t^2
\end{matrix}
\right) = b \big[t^2/12-t/12+1/72\big],
\end{gather*}
with $b>0$ such that $\int_0^1\L_2(t)^2dt=1$. See, e.g., \cite{dunkl} and \cite{helton}.

The Lebesgue space $L_2([0,1]):=\{f\colon [0,1]\to\R,\, \int_0^1f^2\, dx<\infty\}$, equipped with the scalar product $\langle f,g\rangle=\int_0^1f g\,dx$ and the associated norm $\Vert\cdot\Vert$, is a Hilbert space and the polynomials are dense in $L_2([0,1])$.
In particular the family $(\L_j)$, $j=0,1,\ldots$, form an orthonormal basis of $L_2([0,1])$.
Let~$\ell^2$ denotes the space of square-summable sequences with norm also denoted by~$\Vert\cdot\Vert$.

Finally, let $\Vert f\Vert_\infty:={\rm ess}\sup\limits_{x\in [0,1]} \vert f(x)\vert$, and similarly,
for $p\in\R[x]$ let $\Vert p\Vert_\infty:=\sup\limits_{x\in [0,1]}\vert p(x)\vert$.
Then $L_\infty([0,1]):=\{f\colon \Vert f\Vert_\infty<\infty\}$.

\subsection{Some preliminary results}
We next state some useful auxiliary results, some of them being standard in Real Analysis.

\begin{Proposition}
\label{lem1}
Let $t\mapsto f(t)$ be an element of $L_2([0,1])$ and define $\f=(\f(j))$ by
\begin{gather*}
\f(j) := \int_0^1 \L_j(t) f(t)\, dt,\qquad j=0,1,\ldots.
\end{gather*}
Then one has
\begin{gather}
\label{L2-norm-conv}
\displaystyle\sum_{j=0}^\infty \f(j)\L_j \left(:= \lim_{n\to\infty} \sum_{j=0}^n\f(j) \L_j \right) = f \qquad\mbox{in} \quad L_2([0,1]),
\end{gather}
and this decomposition is unique. Moreover
$\f\in\ell^2$ and $\Vert f\Vert = \Vert \f\Vert$.
\end{Proposition}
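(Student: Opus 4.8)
The plan is to invoke the standard Hilbert-space theory of orthonormal systems, relying on the two structural facts already recorded in the excerpt: that $(\L_j)$ is an orthonormal family in $L_2([0,1])$ and that the polynomials are dense there. First I would derive Bessel's inequality. Writing $S_n := \sum_{j=0}^n \f(j)\L_j$, orthonormality gives
\[
0 \;\le\; \big\| f - S_n \big\|^2 \;=\; \|f\|^2 - \sum_{j=0}^n \f(j)^2 ,
\]
so $\sum_{j=0}^n \f(j)^2 \le \|f\|^2$ for every $n$; letting $n\to\infty$ yields $\f\in\ell^2$ with $\|\f\|\le\|f\|$.

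Next I would show that the series converges and identify its limit. For $m<n$, orthonormality gives $\|S_n - S_m\|^2 = \sum_{j=m+1}^n \f(j)^2$, which tends to $0$ since $\f\in\ell^2$; hence $(S_n)$ is Cauchy and, by completeness of $L_2([0,1])$, converges to some $g\in L_2([0,1])$. The crux is then to prove $g=f$. For fixed $k$ and all $n\ge k$ one has $\langle f - S_n, \L_k\rangle = \f(k)-\f(k) = 0$, so continuity of the inner product gives $\langle f-g,\L_k\rangle = 0$ for every $k$. Thus $f-g$ is orthogonal to every $\L_j$, hence to every polynomial; by density of the polynomials in $L_2([0,1])$ one may choose polynomials $p_\nu \to f-g$, and then $\|f-g\|^2 = \lim_\nu \langle f-g, p_\nu\rangle = 0$, so $g=f$. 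This density step is the only non-formal ingredient and is precisely where the spanning property of $(\L_j)$ enters; I expect it to be the main (though routine) obstacle.

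Finally, uniqueness and the Parseval identity follow at once. If $f=\sum_j c_j\L_j$ in $L_2([0,1])$, then continuity of $\langle\,\cdot\,,\L_k\rangle$ together with orthonormality forces $c_k = \langle f,\L_k\rangle = \f(k)$, which gives uniqueness. And since $S_n\to f$, one obtains $\|f\|^2 = \lim_n \|S_n\|^2 = \lim_n \sum_{j=0}^n \f(j)^2 = \|\f\|^2$, upgrading Bessel's inequality to the claimed equality $\|f\|=\|\f\|$.
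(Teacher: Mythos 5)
Your argument is correct and complete: Bessel's inequality, the Cauchy property of the partial sums, identification of the limit via orthogonality to all $\L_j$ (hence to all polynomials, since $\deg\L_j=j$ so the $\L_j$ span $\R[t]$) together with density of polynomials in $L_2([0,1])$, and then uniqueness and Parseval. The paper offers no proof at all, simply labelling the Proposition as standard Real Analysis, and what you have written is exactly the standard argument it implicitly relies on.
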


When the interval is $[-1,1]$ (instead of $[0,1]$ here)~\eqref{L2-norm-conv} is called the Legendre (or Legendre--Fourier)  series expansion of the function~$f$ and $\f=(\f(j))$ is called the vector of Legendre--Fourier coef\/f\/icients.

The notation $f^k$ stands for the function $t\mapsto f(t)^k$, $k\in\N$.
If $f^k\in L_2([0,1])$ we denote
by $\f_k=(\f_k(j))\in\ell^2$ its (shifted) Legendre--Fourier coef\/f\/icients so that
$\Vert f^k\Vert =\Vert \f_k\Vert$ (where again the latter norm is that of $\ell^2$).
Notice that we also have:

\begin{Proposition}
\label{prop1}
Let $f,f^k,g\in L_2([0,1])$ with $k\in\N$. Then $g=f^k$ if and only if
$\hat{\boldsymbol{g}}=\hat{\boldsymbol{f}_k}$.
\end{Proposition}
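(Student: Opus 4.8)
The plan is to derive this statement entirely from Proposition~\ref{lem1}, which already exhibits the map sending a function $h\in L_2([0,1])$ to its vector of Legendre--Fourier coefficients $\hat{\boldsymbol{h}}=\big(\int_0^1\L_j\,h\,dt\big)_{j\geq 0}$ as a well-defined linear isometry from $L_2([0,1])$ into $\ell^2$. Both implications of the equivalence then reduce to standard properties of this isometry, so no genuinely new analytic input is required. The only care needed is to keep track of the standing hypothesis that $g$ and $f^k$ \emph{both} lie in $L_2([0,1])$, since this is precisely what makes their coefficient vectors well-defined and Proposition~\ref{lem1} applicable to each of them.

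For the forward implication I would argue that if $g=f^k$ as elements of $L_2([0,1])$ (i.e.\ almost everywhere), then for every $j$ one has $\hat{\boldsymbol{g}}(j)=\int_0^1\L_j\,g\,dt=\int_0^1\L_j\,f^k\,dt=\hat{\boldsymbol{f}_k}(j)$, because the integral only depends on the almost-everywhere equivalence class. Hence $\hat{\boldsymbol{g}}=\hat{\boldsymbol{f}_k}$. For the converse, the key point is the injectivity of the coefficient map, which I would extract from the isometry clause $\Vert h\Vert=\Vert\hat{\boldsymbol{h}}\Vert$ of Proposition~\ref{lem1}. Applying it to $h:=g-f^k\in L_2([0,1])$ and using linearity of $h\mapsto\hat{\boldsymbol{h}}$ (immediate from linearity of the integral), one obtains $\Vert g-f^k\Vert=\Vert\hat{\boldsymbol{g}}-\hat{\boldsymbol{f}_k}\Vert=0$ whenever $\hat{\boldsymbol{g}}=\hat{\boldsymbol{f}_k}$, whence $g=f^k$ in $L_2([0,1])$. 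Alternatively, one can invoke the uniqueness clause of Proposition~\ref{lem1} directly: both $g$ and $f^k$ are the $L_2$-limit of the \emph{same} series $\sum_j\hat{\boldsymbol{g}}(j)\L_j=\sum_j\hat{\boldsymbol{f}_k}(j)\L_j$, so the two functions coincide.

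I do not anticipate a real obstacle here, as the statement is essentially a repackaging of the completeness and orthonormality of the shifted Legendre basis $(\L_j)$ already recorded above. If anything, the only points worth stating cleanly are that ``equality in $L_2$'' is to be read as almost-everywhere equality, and that the linearity of the coefficient map used in the isometry argument is justified so that $\widehat{g-f^k}=\hat{\boldsymbol{g}}-\hat{\boldsymbol{f}_k}$.
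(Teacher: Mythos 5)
Your argument is correct and matches the paper's own (one-line) justification, which simply invokes the uniqueness of the decomposition in the basis $(\L_j)$; your isometry/Parseval variant is just an equivalent way of expressing that same injectivity. No gap here.
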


This follows from the uniqueness  of the decomposition in the basis $(\L_j)$.

We also have the following helpful results:

\begin{Lemma}
\label{lem2}
Let $f\in L_2([0,1])$ with Legendre--Fourier coefficients $\f=(\f(j))$,
and let $(p_n)\subset\R[t]$
be a sequence of polynomials such that $\Vert p_n-f\Vert\to0$ as $n\to\infty$.

If $\hat{\boldsymbol{p}}_n=(\hat{\boldsymbol{p}}_n(j))$ denotes the $($shifted$)$ Legendre--Fourier coefficients of
$p_n$, for all $n=1,2,\ldots$, then  $\Vert\hat{\boldsymbol{p}}_n-\f\Vert\to0$ in $\ell^2$ as $n\to\infty$.
\end{Lemma}

\begin{proof}
As $\Vert p_n-f\Vert^2=\int_{0}^1(p_n-f)^2\, dt$ and with $d_n=\deg (p_n)$,
\begin{gather*}
\int_{0}^1(p_n-f)^2\, dt =
\int_{0}^1\left(\sum_{j=0}^{d_n}\hat{\boldsymbol{p}}_n(j) \L_j-f\right)^2\,dt\\
\hphantom{\int_{0}^1(p_n-f)^2\, dt}{}
 = \sum_{j=0}^{d_n}(\hat{\boldsymbol{p}}_n(j))^2 \underbrace{\int_{0}^1\L_j^2\,dt}_{=1}
+2\sum_{k<j}\hat{\boldsymbol{p}}_n(j)\hat{\boldsymbol{p}}_n(k) \underbrace{\int_{0}^1\L_j\L_k\,dt}_{=0}\\
\hphantom{\int_{0}^1(p_n-f)^2\, dt=}{}
 -2 \sum_{j=0}^{d_n}\hat{\boldsymbol{p}}_n(j) \underbrace{\int_{0}^1\L_j f\,dt}_{\f(j)}+\underbrace{\Vert f\Vert ^2}_{=\Vert\f\Vert= \sum_j\f(j)^2}\\
\hphantom{\int_{0}^1(p_n-f)^2\, dt}{}
\geq\sum_{j=0}^{d_n}\hat{\boldsymbol{p}}_n(j)^2-2\hat{\boldsymbol{p}}_n(j) \f(j)+\f(j)^2 =
\sum_{j=0}^{d_n}(\hat{\boldsymbol{p}}_n(j)-\f(j))^2,
\end{gather*}
and the result follows because $\Vert p_n-f\Vert\to0$ as $n\to\infty$.
\end{proof}

\begin{Lemma}
\label{lem3}
Let $f\in L_\infty([0,1])$ $($hence $f^k\in L_2([0,1])$ for every $k\in\N)$.
If a sequence $(p_n)\subset\R[x]$ is such that
$\sup_n\Vert p_n\Vert_\infty<\infty$ and $\Vert p_n-f\Vert\to0$ as $n\to\infty$, then for every $k\in\N$,
\begin{gather*}
\lim_{n\to\infty} \big\Vert p_n^k-f^k\big\Vert = 0.
\end{gather*}
In addition, if $\hat{\boldsymbol{p}}^k_n$ denotes the $($shifted$)$ Legendre--Fourier coefficients of
$p^k_n$, $n=1,\ldots$, then $\Vert\hat{\boldsymbol{p}}^k_n-\f_k\Vert\to0$ in $\ell^2$ as $n\to\infty$.
\end{Lemma}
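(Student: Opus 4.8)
The plan is to establish the two assertions in turn: first the $L_2$-convergence $\Vert p_n^k-f^k\Vert\to0$ by an elementary factorization together with a uniform pointwise bound, and then the $\ell^2$-convergence of the coefficients as an immediate application of Lemma~\ref{lem2}.

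For the first assertion I would set $M:=\max\big(\sup_n\Vert p_n\Vert_\infty,\,\Vert f\Vert_\infty\big)$, which is finite because $f\in L_\infty([0,1])$ and $\sup_n\Vert p_n\Vert_\infty<\infty$ by hypothesis. The key is the telescoping factorization
\[
p_n^k-f^k=(p_n-f)\sum_{j=0}^{k-1}p_n^{\,j}f^{\,k-1-j},
\]
valid pointwise. Since $\vert p_n\vert\le M$ and $\vert f\vert\le M$ almost everywhere on $[0,1]$, each of the $k$ summands has absolute value at most $M^{k-1}$ a.e., so that
\[
\big\vert p_n^k(t)-f^k(t)\big\vert\le k\,M^{k-1}\,\vert p_n(t)-f(t)\vert\qquad\text{a.e. on }[0,1].
\]
Squaring and integrating over $[0,1]$ then yields $\Vert p_n^k-f^k\Vert\le k\,M^{k-1}\,\Vert p_n-f\Vert$, whose right-hand side tends to $0$ by hypothesis; this gives $\lim_{n\to\infty}\Vert p_n^k-f^k\Vert=0$.

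For the second assertion I would observe that each $p_n^k$ is again a polynomial in $\R[x]$, that $f^k\in L_2([0,1])$ with Legendre--Fourier coefficients $\f_k$, and that the $L_2$-convergence $\Vert p_n^k-f^k\Vert\to0$ has just been established. Applying Lemma~\ref{lem2} to the function $f^k$ and the polynomial sequence $(p_n^k)$ then produces precisely $\Vert\hat{\boldsymbol{p}}^k_n-\f_k\Vert\to0$ in $\ell^2$, as required.

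The step I expect to be the crux is the uniform pointwise bound on the factor $\sum_{j=0}^{k-1}p_n^{\,j}f^{\,k-1-j}$: this is the only place where the hypothesis $\sup_n\Vert p_n\Vert_\infty<\infty$ is genuinely needed. Mere $L_2$-convergence $p_n\to f$ would not control the cross-terms $p_n^{\,j}f^{\,k-1-j}$, since products of $L_2$ functions need not be controllable in $L_2$; it is the $L_\infty$-bound that renders the power map $g\mapsto g^k$ continuous along this sequence. Once this bound is secured, both the $L_2$ estimate and the reduction to Lemma~\ref{lem2} are routine.
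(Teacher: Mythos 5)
Your proof is correct and follows essentially the same route as the paper: the telescoping factorization $p_n^k-f^k=(p_n-f)\sum_{j}p_n^{\,j}f^{\,k-1-j}$, the uniform bound $kM^{k-1}$ coming from $\sup_n\Vert p_n\Vert_\infty<\infty$ and $\Vert f\Vert_\infty<\infty$, and then an appeal to Lemma~\ref{lem2} for the coefficient convergence. No discrepancies worth noting.
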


\begin{proof}
Let $M>\max[\Vert f\Vert_\infty, \sup_n\Vert p_n\Vert_\infty]$ and f\/ix $k\in\N$,
\begin{gather*}
\big\Vert p^k_n-f^k\big\Vert^2 = \int_0^1\big(p_n^k-f^k\big)^2\,dx = \int_0^1(p_n-f)^2 \left(\sum_{\ell=0}^{k-1} p_n^{k-1-\ell}f^\ell \right)^2 dx\\
\hphantom{\big\Vert p^k_n-f^k\big\Vert^2}{}
\leq \int_0^1(p_n-f)^2\left(\sum_{\ell=1}^{k}\big\vert p_n^{k-\ell}f^{\ell-1}\big\vert\right)^2dx
\leq \big(kM^{k-1}\big)^2\int_0^1(p_n-f)^2\,dx\\
\hphantom{\big\Vert p^k_n-f^k\big\Vert^2}{}
=\big(kM^{k-1}\big)^2\Vert p_n-f\Vert^2 \quad (\to0 \ \mbox{as $n\to\infty$.})
\end{gather*}
Then the last statement follows from Lemma~\ref{lem2}.
\end{proof}

\begin{Definition}
\label{def-1}
Let $f\in L_2([0,1])$ with Legendre--Fourier coef\/f\/icients  $\f$. For every $k,n\in\N$, def\/ine
the polynomial $f^{(k)}_n\in\R[x]$ and the vector $\f^{(k)}_{n}\in\R^{kn+1}$ by
\begin{gather*}
t\mapsto f^{(k)}_n(t) := \left(\sum_{j=0}^n\f(j) \L_j(t)\right)^k = \sum_{j=0}^{nk}\f^{(k)}_n(j) \L_j(t).
\end{gather*}
Observe that each entry $\f^{(k)}_n(j)$, $j=0,\ldots,nk$, is a degree-$k$ form of the
f\/irst $n+1$ Legendre--Fourier coef\/f\/icients of $\hat{\boldsymbol{f}}$.
Completing with zeros, consider $\f^{(k)}_n$ to be an element of $\ell^2$ and if $\f^{(k)}_n$ converges in~$\ell^2$ as $n\to \infty$, call $\f^{(k)}\in\ell^2$ its limit.

The limit $\f^{(k)}$ can also be denoted $\f\star\cdots\star\f$, the limit of the $k$ times ``$\star$-product''
in~$\ell^2$ of the vector $\f\in\ell^2$ by itself. Equivalently one may write $\f^{(k)}=\f^{(k-1)}\star \f$ since
$f^{(k)}_n(t)=f^{(k-1)}_n(t)f^{(1)}_n(t)$ for all $t\in [0,1]$, and $\f^{(k-1)}_n\to \f^{k-1}$ as $n \to\infty$, as well as
$\f^{(1)}_n\to \f$.
\end{Definition}

\begin{Lemma}
\label{lem-funda}
Let $f\in L_\infty([0,1])$, hence $f^k\in L_2([0,1)$ with $($shifted$)$ Legendre--Fourier coefficients $\f_k\in\ell^2$ for every $k\in\N$, and assume that
\begin{gather*}
\sup_n \Vert f^{(1)}_n\Vert_\infty\ \left(=
\sup_n \Vert \sum_{j=0}^n\f(j) \L_j \Vert_\infty  \right) < \infty.
\end{gather*}
Then $\f_k=\f^{(k)}=\f\star\cdots\star\f$ $(k$ times$)$ for every $k=1,2,\ldots$, meaning that for every fixed $k\in\N$,
\begin{gather*}
\lim_{n\to\infty} \left\Vert \left(\sum_{j=0}^n\f(j) \L_j \right)^k-f^k \right\Vert \ \left(=
\lim_{n\to\infty} \left\Vert \sum_{j=0}^{kn}\f^{(k)}_n(j) \L_j -f^k \right\Vert\right)\\
\hphantom{\lim_{n\to\infty} \left\Vert \left(\sum_{j=0}^n\f(j) \L_j \right)^k-f^k \right\Vert}{}
\quad
 = \lim_{n\to\infty}\left\Vert \sum_{j=0}^{n}\f_k(j) \L_j -f^k \right\Vert = 0.
\end{gather*}
Equivalently, $\f_{k}=\f_{k-1}\star \f$ for every $k=2,3,\ldots$.
\end{Lemma}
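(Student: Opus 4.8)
The plan is to recognise that the partial sums $f^{(1)}_n=\sum_{j=0}^n\f(j)\L_j$ are exactly the kind of uniformly bounded, $L_2$-convergent polynomial sequence that Lemma~\ref{lem3} is built to handle, so that the whole statement becomes a matter of applying that lemma and then translating its output through Definition~\ref{def-1}. Concretely, I would set $p_n:=f^{(1)}_n=\sum_{j=0}^n\f(j)\L_j\in\R[t]$ and check the two hypotheses of Lemma~\ref{lem3}. The uniform bound $\sup_n\Vert p_n\Vert_\infty<\infty$ is precisely the standing assumption of the present lemma. The convergence $\Vert p_n-f\Vert\to0$ in $L_2$ is nothing but Proposition~\ref{lem1} applied to $f\in L_2([0,1])$: the partial sums of the Legendre--Fourier series of $f$ converge to $f$ in $L_2$.

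With both hypotheses verified, Lemma~\ref{lem3} delivers at once that, for every fixed $k\in\N$,
\[
\lim_{n\to\infty}\big\Vert p_n^k-f^k\big\Vert=0,
\]
which is the first displayed limit of the statement after substituting $p_n=\sum_{j=0}^n\f(j)\L_j$. Moreover, the final assertion of Lemma~\ref{lem3} gives $\Vert\hat{\boldsymbol{p}}^k_n-\f_k\Vert\to0$ in $\ell^2$, where $\hat{\boldsymbol{p}}^k_n$ is the coefficient vector of $p_n^k$. By Definition~\ref{def-1} one has $p_n^k=\big(f^{(1)}_n\big)^k=\sum_{j=0}^{kn}\f^{(k)}_n(j)\L_j$, so $\hat{\boldsymbol{p}}^k_n=\f^{(k)}_n$; hence $\f^{(k)}_n\to\f_k$ in $\ell^2$, which is exactly the assertion that the $\star$-product limit $\f^{(k)}=\f\star\cdots\star\f$ exists and equals $\f_k$. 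The remaining equality of limits, $\lim_{n\to\infty}\Vert\sum_{j=0}^n\f_k(j)\L_j-f^k\Vert=0$, is again just Proposition~\ref{lem1}, this time applied to the function $f^k\in L_2([0,1])$, and the recursive form $\f_k=\f_{k-1}\star\f$ follows from the pointwise factorisation $f^{(k)}_n=f^{(k-1)}_nf^{(1)}_n$ recorded in Definition~\ref{def-1}.

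The argument is short because the analytic substance has already been packaged in Lemma~\ref{lem3}; the one point genuinely worth watching is the interplay between the two modes of convergence. Proposition~\ref{lem1} only supplies $L_2$-convergence of the partial sums $p_n\to f$, and in general a Legendre--Fourier series need not converge pointwise or uniformly, so $\Vert p_n^k-f^k\Vert\to0$ cannot be obtained by naive manipulation. The role of the hypothesis $\sup_n\Vert f^{(1)}_n\Vert_\infty<\infty$ is precisely to furnish the uniform sup-norm control $M$ that Lemma~\ref{lem3} consumes when it bounds $p_n^k-f^k=(p_n-f)\sum_{\ell}p_n^{k-1-\ell}f^\ell$; this is the main obstacle, and it is exactly what the extra assumption is designed to remove.
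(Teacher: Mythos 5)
Your proposal is correct and follows exactly the paper's own route: the paper's proof is the one-line observation that the result is a direct consequence of Lemmas~\ref{lem2} and~\ref{lem3} applied with $p_n=f^{(1)}_n$, together with Definition~\ref{def-1}. You have simply spelled out the verification of the hypotheses (the sup-norm bound from the standing assumption, the $L_2$-convergence $\Vert f^{(1)}_n-f\Vert\to0$ from Proposition~\ref{lem1}) and the translation of the conclusion, all of which the paper leaves implicit.
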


\begin{proof}
The result  is a direct consequence of Lemmas~\ref{lem2} and~\ref{lem3} with $p_n=f^{(1)}_n$
(and the def\/inition of the limit ``$\star$-product'' in Def\/inition \ref{def-1}).
\end{proof}

\section{Main result}

Assume that we are given all moments of a nonnegative measure $d\mu(x,t)$ on
a box $[a,b]\times [c,d]\subset\R^2$. After a re-scaling of its moments
we may and will assume that $\mu$ is a probability measure supported on $[0,1]^2$ with associated moments
\begin{gather*}
\gamma_i(j) = \int_{[0,1]^2}x^i t^j\,d\mu(x,t),\qquad i,j=0,1,\ldots.
\end{gather*}
We further assume that the marginal measure $\mu_t$ with respect to the variable~$t$, is the Lebesgue measure
on $[0,1]$, that is, $\gamma_0(j)=1/(j+1)$, $j=0,1,\ldots$.

A standard disintegration of the measure $\mu$ yields
\begin{gather}\label{aux44}
\gamma_i(j) = \int_{[0,1]^2} t^j x^i\,d\mu(x,t) = \int_0^1t^j  \Bigg(\underbrace{\int_{[0,1]}x^i  \psi(dx\vert t)}_{=: f_i(t)}\Bigg) dt,\qquad i,j=0,1,\ldots,
\end{gather}
where the stochastic kernel $\psi(\cdot\vert\,t)$ is the conditional probability on $[0,1]$ given $t\in [0,1]$. Observe that the measurable function~$f_i$ in~\eqref{aux44} is nonnegative and uniformly bounded by $1$ because $\vert x^i \vert\leq 1$ on $[0,1]$ for every~$i$, and so $f_i\in L_\infty([0,1])$ for every $i=1,\ldots$.

The vector $\boldsymbol{\gamma}_i=(\gamma_i(j))$, $j=0,1,\ldots$, is the vector of moments of the measure
$d\mu_i(t)=f_i(t)dt$ on $[0,1]$, for every $i=1,2,\ldots$. The (shifted) Legendre--Fourier vector of coef\/f\/icients
$\boldsymbol{\hat{q}}_i$ of $f_i$ are obtained
easily from the (inf\/inite) vector $\boldsymbol{\gamma}_i$ via a triangular linear system. Indeed write
\begin{gather*}
\L_j(t)=\sum_{k=0}^j\Delta_{jk}t^k,\qquad \forall \,t\in [0,1],\quad j=0,1,\ldots,
\end{gather*}
where $\Delta_{jj}>0$, or in compact matrix form
\begin{gather}\label{matrix-delta}
\left[\begin{matrix} \L_0\\ \L_1\\ \cdot\\ \L_n\\\cdot\end{matrix}\right] = \boldsymbol{\Delta}
\left[\begin{matrix} 1\\ t\\ \cdot\\ t^n\\\cdot\end{matrix}\right],
\end{gather}
for some inf\/inite lower triangular matrix $\Delta$ with all diagonal elements being strictly positive. Therefore
\begin{gather*}
\boldsymbol{\Delta} \boldsymbol{\gamma}_i = \boldsymbol{\Delta} \int_0^1\left[\begin{matrix} 1\\ t\\ \cdot\\ t^n\\\cdot\end{matrix}\right] f_i(t)\,dt = \int_0^1 \left[\begin{matrix} \L_0\\ \L_1\\ \cdot\\ \L_n\\\cdot\end{matrix}\right] f_i(t)\,dt =\hat{\boldsymbol{q}}_i.
\end{gather*}

Suppose that the measure $\mu$ is supported on a trajectory
$\{(t,x(t))\colon t\in [0,1]\}\subset [0,1]^2$ for some measurable (density) function $x\colon [0,1]\to [0,1]$.
The measurable function $t\mapsto x(t)$ is an element of $L_\infty([0,1])$ because $\Vert x\Vert_\infty\leq 1$.
Then by Proposition \ref{lem1},
\begin{gather}\label{x(t)}
x = \sum_{j=0}^\infty \x(j)  \L_j\qquad \mbox{in $L_2([0,1])$},
\end{gather}
where $\x=(\x(j))\in\ell^2$ is its vector of (shifted) Legendre--Fourier coef\/f\/icients
(with $\Vert\x\Vert=\Vert x\Vert$).
Similarly, for every $k=2,3,\ldots$, the function $t\mapsto x(t)^k$ is in $L_\infty([0,1])$ and
\begin{gather*}
x^k= \sum_{j=0}^\infty \x_k(j) \L_j\qquad \mbox{in $L_2([0,1])$},
\end{gather*}
with vector of (shifted) Legendre--Fourier coef\/f\/icients $\x_k\in\ell^2$ such that $\Vert x^k\Vert=\Vert\x_k\Vert$.

We also recall the notation $\x^{(k)}_n\in\R^{kn+1}$ for the vector of coef\/f\/icients in the basis $(\L_j)$
of the polynomial $t\mapsto\left( \sum\limits_{j=0}^n\x(j)\L_j(t)\right)^k$, and when considered as an element
of $\ell^2$ (by completing with zeros) denote by $\x^{(k)}\in\ell^2$ its limit when it exists.

\begin{Theorem}
\label{thmain}
Let $\mu$ be a Borel probability measure on $[0,1]^2$ and let $\boldsymbol{\gamma}_i(j)$, $i,j=0,1,\ldots$, be the moments of~$\mu$ in~\eqref{def-moment}.

\begin{enumerate}\itemsep=0pt
\item[$(a)$] If  $\mu$ is supported on a trajectory $\{(t,x(t))\colon t\in [0,1]\}$ for some nonnegative measurable function $t\mapsto x(t)$
on $[0,1]$ and if $\sup\limits_n\left\Vert\sum\limits_{j=0}^n\x(j)\L_j\right\Vert_\infty <\infty$, then
\begin{gather}\label{thmain-1}
\hat{\boldsymbol{x}}_i = \x^{(i)} = \underbrace{\x\star\cdots\star\x}_{\mbox{$i$ times}} =
\hat{\boldsymbol{x}}_{i-1}\star\hat{\boldsymbol{x}},\qquad \forall\,i=2,3,\ldots,
\end{gather}
Equivalently
\begin{gather}
\label{thmain-2}
\boldsymbol{\Delta} \boldsymbol{\gamma}_i = \left(\boldsymbol{\Delta} \boldsymbol{\gamma}_1\right)^{(i)} =
\left(\boldsymbol{\Delta} \boldsymbol{\gamma}_{i-1}\star\boldsymbol{\Delta} \boldsymbol{\gamma}_1\right)^{(i)},
\qquad \forall\, i=2,3,\ldots,
\end{gather}
where $\boldsymbol{\Delta}$ is the non singular triangular matrix defined in~\eqref{matrix-delta}.

\item[$(b)$] Conversely, if~\eqref{thmain-2} holds then~$\mu$ is supported on a trajectory
$\{(t,x(t))\colon t\in [0,1]\}$ for some measurable function $t\mapsto x(t)$ on $[0,1]$, and~\eqref{thmain-1} also holds.
\end{enumerate}
\end{Theorem}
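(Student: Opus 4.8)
The plan is to read everything off the disintegration \eqref{aux44}, whose key consequence is the dictionary $\boldsymbol{\Delta}\boldsymbol{\gamma}_i=\hat{\boldsymbol{q}}_i$, the vector of (shifted) Legendre--Fourier coefficients of the conditional moment function $f_i(t)=\int_{[0,1]}x^i\,\psi(dx\mid t)$. I will translate the geometric statement ``$\mu$ is carried by a trajectory'' into the analytic statement ``the stochastic kernel $\psi(\cdot\mid t)$ is a.e.\ a Dirac mass'', and the latter into ``the conditional variance $f_2-f_1^2$ vanishes a.e.'' The whole argument then amounts to passing back and forth between the functions $f_i$ and their coefficient vectors $\boldsymbol{\Delta}\boldsymbol{\gamma}_i$, using Lemma~\ref{lem-funda} in one direction and an $L_2$/a.e.\ argument in the other.

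For part~$(a)$, if $\mu$ is supported on $\{(t,x(t))\}$ then, by disintegration, $\psi(\cdot\mid t)=\delta_{x(t)}$ for a.e.\ $t$, so $f_i=x^i$ and hence $\boldsymbol{\Delta}\boldsymbol{\gamma}_i=\hat{\boldsymbol{x}}_i$ for every $i$, while $\boldsymbol{\Delta}\boldsymbol{\gamma}_1=\x$. Since $x\in L_\infty([0,1])$ (because $\Vert x\Vert_\infty\le1$) and by hypothesis $\sup_n\Vert\sum_{j=0}^n\x(j)\L_j\Vert_\infty<\infty$, I apply Lemma~\ref{lem-funda} to $f=x$ to obtain $\hat{\boldsymbol{x}}_i=\x^{(i)}=\x\star\cdots\star\x=\hat{\boldsymbol{x}}_{i-1}\star\x$, which is \eqref{thmain-1}; substituting the identities $\hat{\boldsymbol{x}}_i=\boldsymbol{\Delta}\boldsymbol{\gamma}_i$ gives the equivalent form \eqref{thmain-2}.

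For the converse $(b)$, write $g:=f_1$ and $g_n:=\sum_{j=0}^n\hat{\boldsymbol{g}}(j)\L_j$ with $\hat{\boldsymbol{g}}=\boldsymbol{\Delta}\boldsymbol{\gamma}_1$; by Proposition~\ref{lem1}, $g_n\to g$ in $L_2$. The hypothesis \eqref{thmain-2} for $i=2$ asserts that the $\ell^2$ limit $(\boldsymbol{\Delta}\boldsymbol{\gamma}_1)^{(2)}$ of the coefficient vectors of $g_n^2$ exists and equals $\boldsymbol{\Delta}\boldsymbol{\gamma}_2=\hat{\boldsymbol{q}}_2$, the coefficients of $f_2$. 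Because the Legendre--Fourier map is an isometry of $L_2$ onto $\ell^2$ (Proposition~\ref{lem1}), $\ell^2$-convergence of coefficients is exactly $L_2$-convergence of functions, so $g_n^2\to f_2$ in $L_2$. Passing to a subsequence along which $g_n\to g$ a.e., and then to a further subsequence along which $g_n^2\to f_2$ a.e., forces $f_2=g^2=f_1^2$ a.e. Hence the conditional variance $\int x^2\,\psi(dx\mid t)-(\int x\,\psi(dx\mid t))^2$ vanishes for a.e.\ $t$, which means $\psi(\cdot\mid t)=\delta_{x(t)}$ with $x(t):=f_1(t)\in[0,1]$ a measurable function; thus $\mu$ is supported on the trajectory $\{(t,x(t))\}$. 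Finally, with this $x$ one recovers $\hat{\boldsymbol{x}}_i=\boldsymbol{\Delta}\boldsymbol{\gamma}_i$ and $\x=\boldsymbol{\Delta}\boldsymbol{\gamma}_1$ exactly as in $(a)$, so \eqref{thmain-1} is immediate from the assumed \eqref{thmain-2}, with no appeal to Lemma~\ref{lem-funda} and hence no uniform bound needed.

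The routine parts are the disintegration bookkeeping and the substitutions $\hat{\boldsymbol{x}}_i=\boldsymbol{\Delta}\boldsymbol{\gamma}_i$. The delicate step is the converse: since part~$(b)$ does \emph{not} assume $\sup_n\Vert g_n\Vert_\infty<\infty$, I cannot invoke the $L_2$-continuity of squaring (Lemma~\ref{lem3}) to pass from $g_n\to g$ to $g_n^2\to g^2$; the identity $f_2=f_1^2$ must instead be extracted through the subsequence-and-a.e.\ argument above, combining the two convergences $g_n\to f_1$ and $g_n^2\to f_2$. The second genuinely measure-theoretic point is the implication ``vanishing conditional variance $\Rightarrow$ Dirac kernel $\Rightarrow$ trajectory'', which is where the geometric conclusion is actually produced.
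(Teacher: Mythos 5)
Your part $(a)$ follows the paper exactly: identify $f_i=x^i$ via the disintegration, note $\x=\boldsymbol{\Delta}\boldsymbol{\gamma}_1$ and $\hat{\boldsymbol{x}}_i=\boldsymbol{\Delta}\boldsymbol{\gamma}_i$, and apply Lemma~\ref{lem-funda}. Part $(b)$ is correct but takes a genuinely different route, in two respects. First, the paper passes from \eqref{thmain-2} to $f_i=f_1^i$ a.e.\ by citing Proposition~\ref{prop1}; strictly speaking that step needs the identification of the $\star$-power $(\boldsymbol{\Delta}\boldsymbol{\gamma}_1)^{(i)}$ with the Legendre--Fourier coefficients of the pointwise power $f_1^i$, which Lemma~\ref{lem-funda} only provides under the sup-norm bound that part $(b)$ does not assume. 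Your subsequence-and-a.e.\ argument (extract $g_{n_k}\to f_1$ a.e., then a further subsequence along which $g_{n_k}^2\to f_2$ a.e., using that $\ell^2$-convergence of coefficients is $L_2$-convergence of functions) supplies exactly this missing link, so your write-up is tighter than the paper's at this point. Second, once you have $f_2=f_1^2$ a.e.\ you conclude that the conditional variance of $\psi(\cdot\vert t)$ vanishes for a.e.\ $t$, hence $\psi(\cdot\vert t)=\delta_{f_1(t)}$, using only the $i=2$ instance of \eqref{thmain-2}; the paper instead establishes $f_i=f_1^i$ for \emph{all} $i$ and invokes moment determinacy of measures on compact sets to force $\psi(dx\vert t)=\delta_{f_1(t)}$. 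Your route shows that the single condition for $i=2$ already yields the trajectory conclusion (the remaining conditions being needed only to recover \eqref{thmain-1} for all $i$, which you then get by direct substitution, as does the paper), at the price of the equality case of Cauchy--Schwarz; the paper's route uses the full hypothesis but no variance argument. Both are valid, and your final observation that no uniform bound is needed in $(b)$ is consistent with the theorem's statement.
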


\begin{proof}
The (a) part. As $\mu$ is supported on $[0,1]^2$ one has $\Vert x\Vert_\infty\leq 1$
and so the function $t\mapsto x(t)^i$ is in $L_2([0,1])$ for every $i=1,2,\ldots$.
So let $t\mapsto x(t)$ be written as in~\eqref{x(t)}. Consider
the function $t\mapsto x(t)^i$, for every f\/ixed $i\in\N$, so that
\begin{gather*}
x^i = \sum_{j=0}^\infty \x_i(j)  \L_j \qquad\mbox{in $L_2([0,1])$},
\end{gather*}
where the (shifted) Legendre--Fourier vector of coef\/f\/icients $\x_i$ is  obtained by
$\x_i=\boldsymbol{\Delta}^{-1}\boldsymbol{\gamma}_i$. But by Lemma~\ref{lem-funda},
we also have
\begin{gather*}
x^i = \left(\sum_{j=0}^\infty\x(j) \L_j\right)^i  \ \left(= \lim_{n\to\infty}\left(\sum_{j=0}^n \x(j) \L_j \right)^i \right)\qquad \mbox{in $L_2([0,1])$},
\end{gather*}
with $\Vert x^i\Vert=\Vert\x_i\Vert$ and $\x_i=\boldsymbol{\Delta} \boldsymbol{\gamma}_i$.
In other words, $\x^{(i)}=\x_i$ or equivalently,
$\hat{\boldsymbol{x}}^{(i)} = \boldsymbol{\Delta} \boldsymbol{\gamma}_i=
\left(\boldsymbol{\Delta} \boldsymbol{\gamma}_1\right)^{(i)}$,
which is~\eqref{thmain-1}.

We next prove the (b) part.
By the disintegration \eqref{aux44} of the measure $\mu$,
\begin{gather*}
\gamma_i(j) = \int_0^1 t^j f_i(t)\,dt, \qquad i,j=0,1,\ldots
\end{gather*}
for some nonnegative measurable functions $f_i\in L_\infty([0,1])$, $i=1,2,\ldots$.

As~\eqref{thmain-2} holds one may conclude that
$\hat{\boldsymbol{q}}_i=\hat{\boldsymbol{q}}_1^{(i)}$ where $\hat{\boldsymbol{q}}_i$
is the (shifted) Legendre--Fourier vector of coef\/f\/icients associated with $f_i\in L_2([0,1])$, $i=1,\ldots$.
Hence by Proposition~\ref{prop1}, $f_i(t)=f_1(t)^i$ a.e.\ on~$[0,1]$, for every $i=1,2,\ldots$. That is, for every $i=1,2,\ldots$,
there exists a~Borel set $B_i\subset [0,1]$ with Lebesgue measure zero such that
$f_i(t)=f_1(t)^i$ for all $t\in [0,1]{\setminus} B_i$. Therefore  the Borel set $B=\cup_{i=1}^\infty B_i$ has Lebesgue measure zero and for all $i=1,2,\ldots$,
\begin{gather*}
f_i(t)=f_1(t)^i,\qquad\forall\, t\in [0,1]{\setminus} B.
\end{gather*}
Hence for every $t\in [0,1]{\setminus} B$,
\begin{gather*}
\int_{[0,1]}x^i\psi(dx\vert t)=f_1(t)^i=\int_{[0,1]} x^i\,d\delta_{f_1(t)},\qquad \forall\, i=1,2,\ldots.
\end{gather*}
where $\delta_{f_1(t)}$ is the Dirac measure at the point $f_1(t)\in [0,1]$.
As measures on compact sets are moment determinate, one must have $\psi(dx\vert t)=\delta_{f_1(t)}$, for all
$t\in [0,1]{\setminus} B$. Therefore $d\mu(x,t)=\delta_{f_1(t)}\,dt$, i.e., the measure $\mu$ is supported on the trajectory $\{(t,x(t))\colon t\in [0,1]\}$, where $x(t)=f_1(t)$ for
almost all $t\in [0,1]$.
\end{proof}

\begin{Remark}
If the trajectory $t\mapsto x(t)$ is a polynomial of degree say $d$, then the vector of Legendre--Fourier coef\/f\/icients $\x\in\ell^2$ has at most
$d+1$ non-zero elements. Therefore for every $j=2,\ldots$, $\x_j\in\ell^2$ also has at most $jd+1$ non-zero elements
and the condition~\eqref{thmain-2} can be checked easily.
\end{Remark}

In Theorem \ref{thmain}(a) one assume that
$\sup\limits_n\left\Vert \sum\limits_{j=0}^n \x(j) \L_j\right\Vert_\infty<\infty$ which is much weaker than, e.g., assuming
the uniform convergence $\left\Vert \sum\limits_{j=0}^n \x(j) \L_j-x\right\Vert_\infty\to 0$  as $n\to\infty$.
The latter (which is also much stronger than the a.e.\ pointwise convergence)
can be obtained if the function~$x(t)$ has some smoothness properties.
For instance if~$x$ belongs to some Lipschitz class of order larger then or equal to $1/2$, then uniform convergence takes place and one may even obtain rates of convergence; see, e.g.,~\cite{suetin} and also~\cite{wang} for a comparison (in terms of convergence) of Legendre and Chebyshev expansions.
In fact, quoting the authors of \cite{gottlieb1}, {\em ``\dots\ knowledge of the partial spectral sum of an~$L_2$ function in $[-1,1]$ furnishes enough information such that an exponential convergent approximation can be constructed in any subinterval in which $f$ is analytic''.}

\begin{Example}
To illustrate Theorem~\ref{thmain} consider the following toy example with $\mu$ on $[0,1]^2$ and with marginal w.r.t.~``$t$'' being
the uniform distribution on $[0,1]$ and conditional $\psi(dx\vert t)=\delta_{\exp(-t)}$ for all $t\in [0,1]$. That is,
$t\mapsto x(t)=\exp(-t)$.

Then the f\/irst $11$ Legendre--Fourier coef\/f\/icients $\x(j)$, $j=0,\ldots,10$ of $x$ read
\begin{gather*}\x =[\begin{matrix}0.63212055 & -0.1795068  & 0.0230105&  -0.0019370&    0.0001217 \end{matrix} \\
\hphantom{\x =[}{}
\begin{matrix} -0.0000061 &  0.0000002&
  -0.00000001 &  -0.00000004&  -0.0000015&  -0.0000625\end{matrix}].
\end{gather*}
  Similarly the f\/irst $11$ Legendre--Fourier coef\/f\/icients of $t\mapsto x(t)^2=\exp(-2t)$ read
\begin{gather*}\x_2=[ \begin{matrix} 0.4323323 &  -0.2344075 &  0.0588678&  -0.0097965&   0.0012219\end{matrix} \\
\hphantom{\x_2=[}{} \begin{matrix} -0.0001219& 0.0000101& -0.0000007 &   0.00000004& -0.000000004& -0.00000004\end{matrix}].
\end{gather*}
With $n=5$ the polynomial $t\mapsto x^{(2)}_5(t):=\left(\sum\limits_{k=0}^5\x(j) \L_j(t)\right)^2$ reads
 \begin{gather*}
 x^{(2)}_5(t) =\sum_{k=0}^{10} \x_5^{(2)}(j)\,\L_j(t),\qquad t\in\R,\qquad\mbox{with}\\
 \x_5^{(2)}=[\begin{matrix}
0.4323336&  -0.2344129&   0.0588626&  -0.0097976&   0.0012219\end{matrix}\\
\hphantom{\x_5^{(2)}=[}{}
\begin{matrix}  -0.0001218& 0.0000098 & -0.0000006&  0.00000003& -0.000000001&  0.0000000\end{matrix}],
\end{gather*}
and we can observe that $\x^{(2)}_5-\x_2\approx O\big(10^{-5}\big)$.

In fact the curves
$t\mapsto x^{(2)}_5(t)$ and $t\mapsto \exp(-2t)$ are almost indistinguishable on the inter\-val~$[0,1]$.
\end{Example}

\subsection{A more general case}

We have considered a measure $\mu$ on $[0,1]^2$ whose marginal with respect to $t\in [0,1]$ is the Lebesgue measure.
The conditions of Theorem \ref{thmain} are naturally stated in terms of the (shifted) Legendre--Fourier coef\/f\/icients
associated with the functions $t\mapsto f_i(t)$ of $L_2([0,1])$ def\/ined in~\eqref{aux44}.

However, the same conclusions also hold
if the marginal of $\mu$ with respect to $t\in [0,1]$ is some measure $d\nu=h(t)dt$ for some nonnegative function $h\in L_1([0,1])$
with all moments f\/inite.
The only change is that now we have to consider the orthonormal polynomials
$t\mapsto \mathcal{H}_j(t)$, $j=0,1,\ldots$, with respect to~$\nu$. Recall that all the $\mathcal{H}_j$'s can be computed from the moments
\begin{gather*}
\gamma_0(j)=\int t^j d\mu(x,t)=\int_0^1 t^j\,d\nu(t)=\int_0^1 t^j h(t)\,dt,\qquad j=0,1,\ldots.
\end{gather*}
Then proceeding as before, for every $i=1,2,\ldots$,
\begin{gather*}
\int t^j x^i\,d\mu(x,t) = \int_0^1t^j \Bigg(\underbrace{\int_{[0,1]}x^i\psi(dx\vert t)}_{=f_i(t)\in L_2([0,1],\nu)}\Bigg) h(t)\,dt,\qquad j=0,1,\ldots,
\end{gather*}
and we now consider the vector of coef\/f\/icients $\hat{\boldsymbol{f}}_{hi}=(\hat{\boldsymbol{f}}_{hi}(j))$ def\/ined by
\begin{gather*}
\hat{\boldsymbol{f}}_{hi}(j) = \int_0^1 \mathcal{H}_j(t) f_i(t) h(t)\,dt,\qquad j=0,1,\ldots
\end{gather*}
the analogues for the measure $d\nu= h(t)dt$ and the function $f_i\in L_2([0,1],\nu)$, of the (shifted) Legendre--Fourier coef\/f\/icients $\boldsymbol{\hat{f}_i}(j)$ of $f_i$ in~\eqref{aux44} for the Lebesgue measure on~$[0,1]$.
Then the conditions in Theorem~\ref{thmain} would be exactly the same as before, excepted that now,
$\x=(\x(j))$ with
\begin{gather*}
\x(j)=\int_0^1 \mathcal{H}_j(t) x(t)h(t)\,dt,\qquad j=0,1,\ldots.
\end{gather*}

\subsection{Discussion}

Theorem \ref{thmain} may have some practical implications. For instance
consider the \textit{weak} formula\-tion~$\mathcal{P}$ of an optimal control problem~$\P$ as an
inf\/inite-dimensional optimization problem on an appropriate space of (occupation) measures, as
described, e.g., in~\cite{vinter}. In~\cite{sicon} the authors propose to solve a hierarchy of semidef\/inite
relaxations $(\mathcal{P}_k)$, $k=1,2,\ldots$, of~$\mathcal{P}$.  Each optimal solution
of $\mathcal{P}_k$ provides with a f\/inite sequence $\z^k=(z^k_{j,\alpha,\beta})$ such that when $k\to\infty$,
$\z^k\to \z^*$ where~$\z^*$ is the inf\/inite sequence of some measure $d\mu(t,\bx,\u)$ on
$[0,1]\times \X\times \U$, where $\X\subset\R^n$, $\U\subset\R^m$, are compact sets.

Under some conditions both problems $\P$ and its relaxation $\mathcal{P}$ have same optimal value.
If $\mu$ is supported on feasible trajectories $\{(t,\bx(t),\u(t))\colon t\in [0,1]\}$ then these trajectories
are optimal for the initial optimal control problem~$\P$. So it is highly desirable to check whether
indeed~$\mu$ is supported on trajectories from the only knowledge of its moments $\z^*=(z^*_{j,\alpha,\beta})$.

By construction of the moment sequences $\z^k$ one already knows that
the marginal of $\mu$ with respect to the variable ``$t$'' is the Lebesgue measure on~$[0,1]$. Therefore we are typically in
the situation described in the present paper. Indeed to check
whether $\mu$ is supported on trajectories $\{(t,(x_1(t),\ldots,x_n(t),u_1(t),\ldots,u_m(t))\colon t\in [0,1]\}$, one considers each coordinate~$x_i(t)$ or $u_j(t)$ separately. For instance, for $x_i(t)$ one considers the subset of moments $\gamma_k(j)=(z^*_{j,\alpha,0})$
with $j=0,1,\ldots$, $\alpha=(0,\ldots,0,k,0,\ldots,0)\in\N^n$, $k=0,1,\ldots$, with $k$ in position~$i$.
If~\eqref{thmain-1} holds  then indeed the marginal~$\mu_{t,x_i}$ of~$\mu$ on $(t,x_i)$, with moments~$(\gamma_k(j))$
is supported on a trajectory $\{(t,x_i(t))\colon t\in [0,1]\}$.

Of course, in~\eqref{thmain-1} there are countably many conditions to check whereas
in principle only f\/initely many moments of~$z^*$ are available (and with some inaccuracy
due to (a) solving numerically a truncation $\mathcal{P}_k$ of $\mathcal{P}$, and (b) the convergence $\z^k\to\z^*$ has not  taken place yet). So an issue of future investigation is to provide necessary (or suf\/f\/icient?)
conditions based only on f\/initely many (approximate) moments of~$\mu$.

\subsection*{Acknowledgements}

Research  funded by the European Research Council
(ERC) under the European Union's Horizon 2020 research and innovation program
(grant agreement ERC-ADG 666981 TAMING).

\pdfbookmark[1]{References}{ref}
\LastPageEnding

\end{document}